\numberwithin{equation}{section}
\title{Resilience for the Littlewood--Offord Problem}
\author{Afonso S. Bandeira \thanks{Department of Mathematics and Center for Data Science, Courant Institute
of Mathematical Sciences, NYU. Email: bandeira@cims.nyu.edu. ASB acknowledges support from NSF grant DMS-1317308, NSF grant DMS-1712730, and NSF grant DMS-1719545. Part of this work was done while ASB
was with the Department of Mathematics at the Massachusetts Institute
of Technology.} \and Asaf Ferber\thanks{Department of Applied Mathematics, MIT. Email: ferbera@mit.edu. Research is partially supported by an NSF grant 6935855.}\and
Matthew Kwan\thanks{Department of Mathematics, ETH Z\"urich. Email: matthew.kwan@math.ethz.ch.}}
\date{\today}
\theoremstyle{plain}
\newtheorem{theorem}{Theorem}[section]
\newtheorem{lemma}[theorem]{Lemma}
\newtheorem{claim}[theorem]{Claim}
\newtheorem{problem}[theorem]{Problem}
\theoremstyle{definition}
\newtheorem{example}[theorem]{Example}
\global\long\def\sign{\operatorname{sign}}
\global\long\def\floor#1{\left\lfloor #1\right\rfloor }
\global\long\def\ceil#1{\left\lceil #1\right\rceil }
\global\long\def\a{\boldsymbol{a}}
\global\long\def\x{\boldsymbol{\xi}}
\let\originalleft\left
\let\originalright\right
\renewcommand{\left}{\mathopen{}\mathclose\bgroup\originalleft}
\renewcommand{\right}{\aftergroup\egroup\originalright}
\begin{document}
\maketitle

\begin{abstract}
Consider the sum $X\left(\x\right)=\sum_{i=1}^n a_i\xi_i$,
where $\a=(a_i)_{i=1}^n$ is a sequence of non-zero reals and
$\x=(\xi_i)_{i=1}^n$ is a sequence of i.i.d.\ Rademacher random variables (that is, $\Pr[\xi_i=1]=\Pr[\xi_i=-1]=1/2$).
The classical Littlewood--Offord problem asks for the best possible upper bound on the concentration probabilities $\Pr[X=x]$.
In this paper we study a resilience version of the Littlewood--Offord problem: how many of the $\xi_i$ is an adversary typically allowed to change without being able to force concentration on a particular value? We solve this problem asymptotically, and present a few interesting open problems.
\end{abstract}

\section{Introduction}

Let $\a=\left(a_{i}\right)_{i=1}^{n}$
be a fixed sequence of nonzero real numbers, and for a sequence of i.i.d.\
(independent, identically distributed) Rademacher random variables $\x=\left(\xi_{i}\right)_{i=1}^{n}$ (meaning $\Pr[\xi_i=1]=\Pr[\xi_i=-1]=1/2$),
define the random sum
\[
X=X_{\a}\left(\x\right)=\sum_{i=1}^{n}a_{i}\xi_{i}.
\]

Sums of this form are ubiquitous in probability theory. For example, $X$ can be interpreted as the outcome of an unbiased random walk with step sizes given by $\a$. The central limit theorem asserts that if the $a_i$ are all equal then $X$ asymptotically has a normal distribution. More flexible variants of the central limit theorem allow the $a_{i}$ to differ to an extent, and give quantitative control over the distribution of $X$.
An important example is the Berry--Esseen theorem~\cite{Ber,Ess}, which gives an estimate for the probability that $X$ lies in a given interval, comparing it to the corresponding probability for an appropriately scaled normal distribution (we give a precise statement, adapted to our context, later in the paper). The Berry--Esseen theorem is effective when the $a_{i}$ are of the same order of magnitude, in which case it can be used to easily deduce the estimate
\[
\Pr\left[X=x\right]=O\left(\frac{1}{\sqrt{n}}\right)
\]
for any $x$. Qualitatively, it guarantees that $X$ is unlikely to be concentrated on any particular value ($X$ is \emph{anti-concentrated}).

Over half a century ago, in connection with their study of random polynomials, Littlewood
and Offord~\cite{LO} considered anti-concentration in the general setting
where
no assumption is made on $\a$, other than that its entries being nonzero.
The classical
result of Littlewood and Offord~\cite{LO} strengthened by Erd\H{o}s~\cite{ELO} states that no matter the choice of $\a\in (\mathbb R\setminus\{0\})^n$, for all $x\in \mathbb R$ we have
\[
\Pr\left[X=x\right]\leq \left. \binom{n}{\lfloor n/2\rfloor} \middle/ 2^{n}=O\left(\frac1{\sqrt{n}}\right),\right.
\]
which is sharp for the sequence $\a=(1,1,\dots,1)$. This result is particularly remarkable due to the fact that if one does not assume anything about the $a_i$, then the distribution of $X$ may be far from normal and Berry--Esseen type bounds may no longer be meaningful.

Erd\H os' proof of the above inequality was combinatorial and extremely simple, as follows. First, we can assume that each $a_i$ is positive, because changing the sign of some $a_i$ does not affect the distribution of $X$. Then, observe that a sign vector $\x\in\{-1,1\}^n$ can be identified with the subset $\{i:\xi_i=1\}$ of $\{1,\dots,n\}$, and under this identification each fiber $X^{-1}\left(x\right)=\left\{ \x:X\left(\x\right)=x\right\}$ corresponds to a \emph{Sperner family}\footnote{A Sperner family is a collection of subsets of $[n]$ in which no subset is included in any other. For more details on Sperner families, the reader is referred to the book of Bollob\'as~\cite{BolBook}.}. It then suffices to apply a classical bound for the maximal size of a Sperner family.

Since the Littlewood--Offord problem was first introduced, many variants of it have been addressed; one particularly interesting line of research involves the relationship between the
structure of $\a$ and the resulting concentration probability $\max_{x}\Pr\left[X=x\right]$.
Erd\H os and Moser~\cite{EM} and S\'ark\"ozy and Szemer\'edi
\cite{SS} considered the case where the $a_{i}$ are all distinct,
and showed that the stronger bound $\Pr\left[X=x\right]=O\left(n^{-3/2}\right)$
holds.
Hal\'asz~\cite{Hal} gave even stronger
bounds for sequences which are ``arithmetically unstructured''
in an appropriate sense. More recently, Tao and Vu~\cite{TV09b,TV10}
and Nguyen and Vu~\cite{NV11} investigated the inverse
problem of characterizing the arithmetic structure of $\a$ given
the concentration probability $\max_{x}\Pr\left[X=x\right]$.

Many fruitful connections have been found between Littlewood--Offord-type problems and various areas of mathematics. In particular,
Littlewood--Offord-type theorems are essential tools in some of the landmark results in random matrix theory (see for example~\cite{TV09a,TV09b}). In particular, the Littlewood--Offord theorem gives an upper bound on the probability that a particular row of a random $\pm1$ matrix is orthogonal to a given vector, and can thus be used (see for example~\cite[Section~14.2]{BolRandom}) to bound the probability that a Bernoulli random matrix is singular.

\subsection{Our Results}

In this paper we are interested in studying a ``resilience'' version of the Littlewood--Offord problem. Given a sequence $\a\in (\mathbb R\setminus \{0\})^n$ and a real number $x\in \mathbb R$, we know that most sequences $\x\in\{-1,1\}^n$ do not satisfy the event $\{X\left(\x\right)=x\}$. We are interested in understanding whether most sequences $\x$ are ``far'' from this event.
In order to make this question precise we need a few definitions. Given two sequences $\x,\x'\in\{-1,1\}^n$ we define $d\left(\x,\x'\right)$ to be the \emph{Hamming
distance} between $\x$ and $\x'$ (that is, $d\left(\x,\x'\right)$ denotes the number of coordinates in which
$\x$ and $\x'$ differ). If $S\subset\{-1,1\}^n$ is a subset of the hypercube we further define $d\left(\x,S\right)$ as the minimum Hamming distance from $\x$ to a point in $S$. Finally, for a fixed sequence $\a$ of non-zero reals and $\xi\in \{-1,1\}^n$, let us define
\[
R_{x}\left(\x\right):=R^{\a}_{x}\left(\x\right)=d\left(\x,X^{-1}\left(x\right)\right),
\]
which is the minimum number of signs one needs to change in $\x$ in order to satisfy $X=x$. (For completeness, if $X=x$ is impossible then we set $R_x(\x)=\infty$). We refer to $R_x(\x)$ as the \emph{resilience} of $\x$ with respect to the event $\{X\ne x\}$, and if $R_x> k$ we say $\x$ is \emph{$k$-resilient}.

Given $\a$ we define
\[
 q_{k}(\a)=\max_{x}\Pr\left[R^{\a}_{x}\left(\x\right)\le k\right].
\]
as the maximum probability that $\x$ fails to be $k$-resilient.
We also define $p_k(n)$ as the ``worst case'' for this probability over all sequences $\a\in (\mathbb{R}\setminus\{0\})^n$:
\[
p_{k}\left(n\right)=\max_{\a\in\{\mathbb{R}\setminus\{0\}\}^n}q_{k}(\a)
\]
Equivalently, $p_{k}(n)$ corresponds to the maximum volume of the $k$-neighbourhood of a suitable ``Boolean
hyperplane'' $X^{-1}\left(x\right)$ in the hypercube.

An immediate natural question is as follows:
\begin{problem}
\label{prob:individual}Given a non-negative integer $k$, what is the asymptotic behavior of $p_{k}\left(n\right)$ as $n\to\infty$?
\end{problem}

The Erd\H{o}s--Littlewood--Offord bound trivially gives $$p_{0}\left(n\right)=\Theta\left(1/\sqrt{n}\right).$$

Understanding the case $k=1$ already has non-trivial implications.
In fact, F\"uredi, Kahn and Kleitman~\cite{FKK90}
showed that there are Sperner families whose 1-neighbourhood comprises a
constant proportion of the hypercube, while we will see in Section~\ref{sec:individual-lower} that $p_{1}\left(n\right)\to0$. This demonstrates a special structural property of ``arithmetic''
Sperner families of the form $X^{-1}\left(x\right)$.

More generally, we
believe an especially interesting question is to understand the qualitative
behaviour of $p_{k}\left(n\right)$, as a function of $k$.
\begin{problem}
\label{prob:threshold}For which $k=k(n)$ does $p_{k}\left(n\right)\to0$ as $n\to\infty$?
\end{problem}
In other words, we are asking for which $k$ we can expect a typical
$\x\in\{-1,1\}^n$ to be $k$-resilient, regardless of the choice of $x$ and $\a$.
This question is especially compelling in view of the recent popularity
of resilience problems for random graphs (see for example the influential survey of Sudakov and Vu~\cite{SV08}),
and in view of questions asked by Vu~\cite[Conjectures~7.4-5]{Vu08} concerning the resilience of the singularity of random matrices. Specifically, Vu asked how many entries of a random $\pm1$ matrix one has to change (``globally'' or ``locally'') to make it singular; due to the connection between the Littlewood--Offord problem and singularity of random matrices, these conjectures were our initial motivation to investigate the questions treated in this paper.

Before stating our results, we compute the typical resilience for a few simple illustrative specific choices of $\a$ and $x$.

\begin{example}
\label{exa:111} Consider the case $\a=\left(1,\dots,1\right)$, and for simplicity assume $n$ is even. One can easily derive that for all even $x$ we have
$$\Pr[X=x]=\binom{n}{\frac{n+x}{2}}2^{-n}.$$ Standard binomial estimates show that with (say) 99\% certainty we have $|X|=\Theta(\sqrt{n})$. Noting that $R_0=|X|/2$, we typically have $R_0=\Theta(\sqrt{n})$.
\end{example}
\begin{example}
  Let us next consider the sequence $\a=(1,2,\ldots,n)$. Since all the $a_i$ are distinct, it follows from the result of S\'ark\"ozy and Szemer\'edi \cite{SS} that $q_0(\a)=O\left(n^{-3/2}\right)$. Moreover, changing $k$ signs of $\x$ can increase or decrease $X$ by no more than $kn$, so there are at most $2kn+1$ ways to affect $X$ by changing $k$ signs. Therefore, as long as $kn=o(n^{3/2})$ (that is, $k=o(n^{1/2})$), the union bound shows that for any $x$, typically $R_x\ge k$.
\end{example}

\begin{example}
\label{exa:doubling} Take $\a=\left(1,2,4,\dots,2^{n-1}\right)$. Note that $X$ can
take $2^{n}$ different values (the odd integers between $-2^{n}$
and $2^{n}$). This of course leads to the minimum possible concentration
probability $\max_x\Pr\left[X=x\right]=2^{-n}$. Each $x$ in the support
of $X$ can be obtained by exactly one $\x$, so $R_{x}$ has the
binomial distribution $\operatorname{Bin}\left(n,1/2\right)$ and is tightly concentrated around $n/2$ by a large deviation
inequality for the binomial distribution (see for example~\cite[Theorem~2.1]{JLR00}).
\end{example}

We can see from the above three examples that the type of additive structure influencing the concentration probability does contribute somewhat to the typical resilience. However, the following example shows that the typical resilience can be much more strongly influenced by small subsequences of $\a$.

\begin{example}
\label{exa:upper-bound}Let $k$ be the minimal integer such that
$k\ge\log_2 n$ and $n-k$ is odd. Define $\a$ by $a_{1}=\dots=a_{n-k}=1$
and $a_{n-k+i}=2^{i-1}$. For any $\x$, modifying at most $k$ coordinates
we can make $\sum_{i=1}^{k}\xi_{n-k+i}a_{n-k+i}$ equal to any odd
number between $-n$ and $n$, so in particular we can make it equal
to $-\sum_{i=1}^{n-k}\xi_{i}a_{i}$, so that $X=0$. This means $R_{0}=O\left(\log n\right)$
(with probability 1).
\end{example}

Somewhat surprisingly, there is a sequence which typically results in significantly lower resilience than Example~\ref{exa:upper-bound}.

\begin{theorem}
\label{thm:upper-bound}There exists a sequence $\a\in \left(\mathbb{R}\setminus \{0\}\right)^n$ such that for any fixed $\varepsilon>0$, a.a.s.\footnote{By ``asymptotically almost surely'', or ``a.a.s.'', we mean that
the probability of an event is $1-o\left(1\right)$. Here and for
the rest of the paper, asymptotics are on $n\to\infty$.}\ $R_{0}\left(\x\right)\le(1+\varepsilon)\log_3\log n$. (That is to say, for $k\ge (1+\varepsilon))\log_3\log n$, we have $p_k\to1$).
\end{theorem}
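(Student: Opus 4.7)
The plan is to construct $\a$ explicitly as a hierarchical sequence, and then analyze the resilience by a top-down cascade combined with a probabilistic coverage argument that exploits the randomness of $\x$.

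I would build $\a$ as a long bulk of copies of $1$ together with $L = (1+\varepsilon)\log_3\log n$ ``corrector groups''; group $C_\ell$ contains $s_\ell = \omega(1)$ copies of a common value $v_\ell$, with the $v_\ell$ growing at a chosen geometric rate and with $v_L$ calibrated to the typical magnitude $\Theta(\sqrt n)$ of the bulk sum. The multiplicity $s_\ell > 1$ is key: a Chernoff bound plus a union bound over the $L$ levels ensures that a.a.s.\ both $+1$ and $-1$ appear among the $\xi_i$'s at each corrector level, so a single flip at level $\ell$ can change $X$ by either $+2v_\ell$ or $-2v_\ell$ at will. This yields three effective choices per level (no flip, $+$-flip, $-$-flip), which compound multiplicatively across levels.

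For a typical $\x$ the strategy to produce a nearby $\x'$ with $X(\x')=0$ is a top-down cascade: starting at the largest level $\ell=L$, use at most one corrector-flip per level to reduce the residual $X$ to the scale of the next smaller value $v_{\ell-1}$; once all $L$ levels are processed the residual is small and a bounded number of bulk flips among the $1$'s complete the correction. The total flip count is $L+O(1)=(1+\varepsilon)\log_3\log n+O(1)$, as required.

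The main obstacle is that a purely deterministic realization of this cascade --- deterministically covering the whole typical target range $[-C\sqrt n,C\sqrt n]$ with the $3^L$ achievable combined corrections --- would force $3^L\gtrsim \sqrt n$, i.e.\ $L=\Theta(\log n)$, which only reproduces Example~\ref{exa:upper-bound}. Breaking this barrier down to $\log_3\log n$ requires exploiting the randomness of $\x$: the set of cumulative corrections achievable by the cascade is itself a random subset of the target range (it depends on the random $\xi_i$'s at the correctors), and one must show via a second-moment / anti-concentration argument that this random set hits the (also random) target $X(\x)/2$ a.a.s., even though it has far fewer elements than a deterministic covering would require. The optimal trade-off between the geometric growth rate of the $v_\ell$ and the per-level multiplicity is precisely what produces the base $3$ in the final bound, and verifying the probabilistic coverage at each level --- together with a careful treatment of parity and of the boundary between the corrector scale and the bulk scale --- is the technical heart of the argument.
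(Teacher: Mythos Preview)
Your proposal has a genuine gap at precisely the point you flag as the ``technical heart''. With your architecture --- $L=(1+\varepsilon)\log_3\log n$ corrector levels, each carrying $s_\ell$ copies of a single value $v_\ell$, and at most one flip per level --- the set of achievable cumulative corrections is, conditional on the a.a.s.\ event that both signs appear at every level, exactly the \emph{deterministic} set
\[
\Big\{\textstyle\sum_{\ell=1}^L 2c_\ell v_\ell:\ c_\ell\in\{-1,0,+1\}\Big\}
\]
of size at most $3^L=\Theta(\log n)$. There is no residual randomness in this set to exploit: the randomness of $\x$ at the corrector indices has already been spent guaranteeing that both signs are available. The bulk sum is (after conditioning on the correctors) essentially a random even integer spread over an interval of width $\Theta(\sqrt n)$, so the probability that it lands within $O(1)$ of one of your $\Theta(\log n)$ correction values is $O(\log n/\sqrt n)\to 0$, not $1-o(1)$. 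A second-moment argument cannot repair this, because the first moment already tends to $0$. (If instead you allow several flips per level so that the achievable set genuinely depends on the random sign counts, then the total flip budget is no longer $L+O(1)$, and you are back to needing $\Theta(\log n)$ flips.) In short, a single-value-per-level geometric ladder cannot beat Example~\ref{exa:upper-bound}.

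The paper's construction is structurally different and avoids any probabilistic coverage step. Rather than one value per level, it plants an \emph{additive basis}: a set $B\subseteq[g]$ such that every integer in $[g]$ is a sum of at most $h$ distinct elements of $B$. The key ingredient (Lemma~\ref{lem:additive-basis}, a Rohrbach-type recursive construction) is that such a basis exists with $\sum_{b\in B}b^2\le 10^h g^{2+2/(3^h-1)}$; the base $3$ in the theorem comes from this exponent in the sum-of-squares bound, not from a ``three choices per level'' count. Taking $g\approx n/\log^2 n$ and $h\approx\log_{3-\varepsilon}\log n$, one includes $\Theta(\log n)$ copies of each $b\in B$ (plus two small auxiliary gadgets to handle the gap between $g$ and $\sqrt n$, and a bulk of $1$'s). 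The sum-of-squares bound ensures the basis contributes only $o(n)$ to $\operatorname{Var}(X)$, so a.a.s.\ $|X|=O(\sqrt n)$; then, since every integer up to $g$ is \emph{exactly} representable as a sum of at most $h$ basis elements, $h$ sign flips (one per summand, using the a.a.s.\ availability of both signs) bring $X$ to $0$ deterministically --- no coverage or second-moment argument is needed.
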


The crux of Example~\ref{exa:upper-bound} was the fact that one can form all non-negative integers less than $2^{k}$ with
sums of subsets of $\left\{ 1,2,4,\dots,2^{k-1}\right\} $. In other words, $\left\{ 1,\dots,2^{k-1}\right\} $ is an \emph{additive basis
}of $\left\{ 0,1,2,\dots,2^{k}-1\right\} $. The proof of Theorem~\ref{thm:upper-bound}, which we defer to Section~\ref{sec:upper-bound}, involves a more efficient additive basis construction, using an idea from a 1937 paper of Rohrbach~\cite{Roh37}.

We are also able to prove that Theorem~\ref{thm:upper-bound} is in fact optimal, essentially answering Problem~\ref{prob:threshold}.
\begin{theorem}
\label{thm:lower-bound}For any fixed $\varepsilon>0$, any $\a\in \left(\mathbb{R}\setminus \{0\}\right)^n$ and any $x\in \mathbb{R}$, a.a.s.\ $R_{x}\left(\x\right)\ge(1-\varepsilon)\log_3\log n$. (That is to say, for $k=(1-\varepsilon)\log_3\log n$, we have $p_k\to0$).
\end{theorem}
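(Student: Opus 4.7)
\noindent\textbf{Proof plan for Theorem~\ref{thm:lower-bound}.} Write $p_k:=\Pr[R_x(\boldsymbol\xi)\le k]$; the goal is $p_k=o(1)$ when $k\le(1-\varepsilon)\log_3\log n$. The natural first step is to reformulate the event: $R_x(\boldsymbol\xi)\le k$ holds exactly when there is some $T\subseteq[n]$ with $|T|\le k$ such that flipping the signs $\xi_i$ for $i\in T$ produces the outcome $X=x$, which algebraically becomes $X(\boldsymbol\xi)-x=2\sum_{i\in T}a_i\xi_i$. Letting
\[
V^* \;:=\; \Bigl\{2\sum_{i=1}^n \epsilon_i a_i \;:\; \epsilon\in\{-1,0,1\}^n,\ \#\{i:\epsilon_i\ne 0\}\le k\Bigr\},
\]
we then have $p_k\le \Pr[X(\boldsymbol\xi)-x\in V^*]\le |V^*|\cdot\sup_v\Pr[X=v]$. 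The trivial bounds $|V^*|\le\sum_{j\le k}\binom nj 2^j\le(2n)^k$ combined with the Erd\H os--Littlewood--Offord estimate $\sup_v\Pr[X=v]=O(1/\sqrt n)$ give only $p_k=O((2n)^k/\sqrt n)$, which already fails at $k=1$, so a structural refinement is essential.

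The plan is to close this gap via a trade-off: $|V^*|$ is close to its maximum $(2n)^k$ exactly when the $a_i$'s are highly ``dissociated'', in which case a Hal\'asz-type strengthening of Littlewood--Offord forces $\sup_v\Pr[X=v]$ to be substantially smaller than $1/\sqrt n$; conversely, when many of the signed subset sums coincide, $|V^*|$ itself must be small. I would quantify this by introducing an additive-structure quantity $R_\ell(\boldsymbol a)$ (e.g., the number of $2\ell$-tuples with $a_{i_1}+\dots+a_{i_\ell}=a_{j_1}+\dots+a_{j_\ell}$) that simultaneously controls both sides, and do a dichotomy: if $R_\ell$ is small, Hal\'asz's inequality gives $\sup_v\Pr[X=v]\ll n^{-k}$, so that the $(2n)^k$ bound on $|V^*|$ already suffices; if $R_\ell$ is large, its additive coincidences can be exploited to reduce $|V^*|$ to a quantity of order $O(3^k)$, after which the classical $O(1/\sqrt n)$ estimate closes the case since $3^k=(\log n)^{1-\varepsilon}=o(\sqrt n)$.

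The base-3 logarithm in the threshold is intrinsic to the three-valued parameterization $\epsilon_i\in\{-1,0,1\}$ of $V^*$ and mirrors the base 3 in Theorem~\ref{thm:upper-bound}, so the extremal case must be the structured regime where $V^*$ genuinely has $\Theta(3^k)$ size (morally, where $\boldsymbol a$ resembles the Rohrbach-based construction used for Theorem~\ref{thm:upper-bound}). The main obstacle, and where I expect the bulk of the technical work to lie, is executing the dichotomy with the right quantitative parameters so as to obtain the precise constant $\log_3$, uniformly in $x$ and covering all intermediate regimes where $\boldsymbol a$ has partial but not full additive structure. A clean target statement would be a single bound of the form $|V^*|\cdot\sup_v\Pr[X=v]=O(3^k/\sqrt n)$ valid for every admissible $\boldsymbol a$, from which the theorem follows immediately.
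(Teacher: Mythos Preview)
There is a genuine gap: your target inequality $|V^*|\cdot\sup_v\Pr[X=v]=O(3^k/\sqrt n)$ is false. Already for $k=1$ Theorem~\ref{thm:pk-individual} gives $p_1(n)=\Theta(n^{-1/6})$, and since your chain of inequalities yields $p_k\le|V^*|\cdot\sup_v\Pr[X=v]$, the product must be at least $\Omega(n^{-1/6})$ for the extremal sequence, not $O(n^{-1/2})$. Concretely, take $a_1=\dots=a_{n-g}=1$ and $a_{n-g+i}=i$ with $g=\Theta(n^{1/3})$: here $\sup_v\Pr[X=v]=\Theta(n^{-1/2})$ while $|V^*|=2g+1=\Theta(n^{1/3})$, so the product is $\Theta(n^{-1/6})$. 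This sequence defeats your dichotomy in both branches. Because almost all $a_i$ equal $1$, any Hal\'asz-type count $R_\ell$ is enormous and Hal\'asz gives nothing beyond $O(n^{-1/2})$; yet the $g$ distinct entries already inflate $|V^*|$ to polynomial size, nowhere near $O(3^k)$. The underlying issue is that $R_\ell$ is a \emph{global} statistic of all $n$ coefficients, whereas $|V^*|$ is governed by the multiset of values on \emph{small} subsets, and a handful of generic entries decouples the two. Your reading of the base $3$ as coming from the ternary alphabet $\{-1,0,1\}$ is also off the mark.

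The paper's proof is structurally different and does not attempt to bound $\Pr[X-x\in V^*]$ globally. It establishes a recursion for $p_k$ (Lemma~\ref{thm:recursive}) by isolating the few ``abnormally large'' coefficients, namely those $a_i$ that are within a factor $f$ of the standard deviation of the partial sum $\sum_{j\le i}a_j^2$; a doubling argument shows there are at most $O(f^2\log n)$ of them. Sign changes among these large coefficients are handled by a union bound costing $O(f^2\log n)$ per change and reducing $k$ to $k-\ell$; if no large coefficient is touched, then $k$ flips among the remaining entries can move $X$ by at most $k\sigma/f$, which the Berry--Esseen theorem converts into a probability $O(k/f)$. Iterating this inequality with $f=n^{\Theta(3^{-k})}$ yields $p_k\le n^{-(3+\delta)^{-k-1}}$, which is $o(1)$ precisely when $k\le(1-\varepsilon)\log_3\log n$. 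The base~$3$ thus emerges from balancing the $f^2$ union-bound cost against the $1/f$ Berry--Esseen error in the recursion, not from ternary combinatorics.
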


We prove Theorem~\ref{thm:lower-bound} in Section~\ref{sec:lower}.
%via a recurrence relation for $p_k$. The basic idea is to identify a small subsequence of very large $a_i$ that could potentially have a drastic effect on the resilience (as in Example~\ref{exa:upper-bound}), and treat it separately.

As for Problem~\ref{prob:individual}, for each fixed $k$ we are able to find the asymptotics
of $p_{k}\left(n\right)$ up to a polylogarithmic factor, as stated in the next theorem.

\begin{theorem}
\label{thm:pk-individual}We have
\[
p_{1}=\Theta\left(n^{-1/6}\right),
\]
and for any fixed $k\geq 2$,
\[
p_{k}\left(n\right)=n^{-1/(2\times 3^k)}\log^{O\left(1\right)}n.
\]
\end{theorem}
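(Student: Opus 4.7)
The basic reduction is that $R_{x}(\x) \leq k$ holds iff there exists $S \subseteq [n]$ with $|S| \leq k$ such that $X_{[n]\setminus S}(\x) - X_{S}(\x) = x$, where $X_T(\x) := \sum_{i \in T} a_i \xi_i$; equivalently $(X(\x)-x)/2 \in B_k(\x) := \{X_S(\x) : |S|\le k\}$. A naive union bound over $S$ combined with the Erd\H{o}s--Littlewood--Offord inequality yields only $O(n^{k-1/2})$, so the bulk of the work lies in replacing this with an estimate that exploits the structure of $\a$.

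For the upper bound I propose a case split on $m := |\{|a_i| : i \in [n]\}|$, the number of distinct magnitudes. In the case $k=1$ the union $\bigcup_i \{x,\, x \pm 2a_i\}$ has size at most $2m+1$, so $\Pr[R_x \le 1] \le (2m+1) \max_y \Pr[X = y]$. If $m \le n^{1/3}$ the Erd\H{o}s--Littlewood--Offord bound already gives $\Pr[R_x \le 1] = O(m/\sqrt n) = O(n^{-1/6})$. If $m \ge n^{1/3}$ the abundance of distinct magnitudes must be converted, via a Hal\'asz-type inverse Littlewood--Offord inequality, into a polynomial improvement of $\max_y \Pr[X = y]$ over $n^{-1/2}$, large enough to keep the product at $O(n^{-1/6})$. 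Pinning down the correct quantitative form of such a Hal\'asz bound (with the right dependence on the multiplicity profile of $\a$) is the main technical obstacle.

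For $k \ge 2$ I would iterate the same dichotomy: freezing the signs outside a small ``pilot'' subset reduces the problem to an analogous one with parameter $k-1$, with a new additive-structure quantity related polynomially to the previous one. The factor $3$ in the exponent $1/(2 \cdot 3^k)$ is natural in this setup because each new flipped index contributes one of three displacements ($0$, $+a_i$, $-a_i$) to the recursion, effectively tripling the structure parameter between successive levels; the $\log^{O(1)} n$ slack absorbs the usual losses in Hal\'asz-type arguments and in the union bound over pilot subsets.

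For the matching lower bound I would construct $\a$ as a mixture of $n - o(n)$ copies of the value $1$ (so that $X$ behaves like a centered binomial with fluctuations of order $\sqrt n$) together with a carefully chosen efficient additive basis of a dense arithmetic progression, in the spirit of the Rohrbach-type construction referenced for Theorem~\ref{thm:upper-bound}. Tuning the basis so that $k$ flips can reach any value in a target set of size roughly $n^{1/2 - 1/(2 \cdot 3^k)}$ then forces $\Pr[R_x \le k] \ge n^{-1/(2 \cdot 3^k)}/\log^{O(1)} n$. The main obstacle on the lower-bound side is verifying that efficient $k$-fold bases exist with exactly the $3^k$ scaling of reachable values, so that the upper and lower bounds line up.
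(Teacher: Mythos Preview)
Your $k=1$ upper bound is on the right track and matches the paper's case split on the number $g$ of distinct values in $\a$. But the second case is simpler than you suggest: when $g \ge n^{1/3}$, the paper picks one representative index for each distinct value, conditions on all other coordinates, and applies S\'ark\"ozy--Szemer\'edi directly (distinct $a_i$ give atom probability $O(g^{-3/2})$) rather than a Hal\'asz-type inverse theorem. The union bound over the at most $2g+1$ possible targets then gives $O(g^{-1/2})\le O(n^{-1/6})$. No quantitative Hal\'asz bound is needed, so what you flag as the ``main technical obstacle'' is bypassed entirely.

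For $k\ge2$ your plan to ``iterate the same dichotomy'' via a pilot subset is not what the paper does, and it is not clear it can be made to work: the distinct-values count does not behave well under removing a pilot subset, and your heuristic for the factor $3$ (three displacements $0,\pm a_i$ per index) is not the actual mechanism. The paper instead proves a recursive inequality (Lemma~\ref{thm:recursive}) by isolating a short set $I$ of ``abnormally large'' entries---those $a_i$ with $a_i > \sigma_{[i]}/f$ for a free parameter $f$---and splitting according to how many of the $k$ flips land in $I$. If $\ell\ge 1$ flips land in $I$, a union bound over $(O(f^2\log n))^\ell$ choices reduces to $p_{k-\ell}$ on the complement. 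If all $k$ flips avoid $I$, then each such flip changes $X$ by at most $\sigma/f$, and Berry--Esseen bounds the probability that $X$ is within $k\sigma/f$ of the target by $O(k/f)$. Choosing $f=n^{1/(2\cdot 3^k)}$ so that the two contributions balance yields $p_k \lesssim p_{k-1}^{1/3}$, which is the true source of the exponent $3^k$. The Berry--Esseen step (controlling what happens when \emph{no} large entries are flipped) is the idea missing from your sketch.

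Your lower-bound plan is essentially the paper's: take $n-o(n)$ copies of $1$ together with $\Theta(\log n)$ copies of each element of an order-$k$ additive basis $B$ of $[g]$ with $g\approx n^{1/2-1/(2\cdot3^k)}$, using the sum-of-squares bound of Lemma~\ref{lem:additive-basis}. Berry--Esseen gives $\Pr[|X|\le 2g]=\Theta(g/\sqrt n)$, and on that event $k$ sign changes among the basis elements bring $X$ to zero. For the sharp bound $p_1=\Omega(n^{-1/6})$ the paper drops the $\log n$ repetitions (taking $a_{n-g+i}=i$ with $g=(\varepsilon n)^{1/3}$) and does a more careful sign/parity analysis to ensure the needed flip is available without the repetition trick.
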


\subsection{Notation}
For a set of indices $I\subseteq\left[n\right]$ define
$$X_{I}\left(\x\right)=\sum_{i\in I}a_{i}\xi_{i}$$
to be the ``part'' of $X$ corresponding to $I$.

We use standard asymptotic notation throughout. For functions $f=f(n)$ and $g=g(n)$ we write
%\begin{itemize}
%\item{
$f=O(g)$ to mean there is a constant $C$ such that $|f|\le C|g|$, %}
%\item{
we write $f=\Omega(g)$ to mean there is a constant $c>0$ such that $f\ge c|g|$, %}
%\item{
we write $f=\Theta(g)$ to mean that $f=O(g)$ and $f=\Omega(g)$, %}
%\item{
and we write $f=o(g)$ or $g=\omega(f)$ to mean that $f/g\to 0$. %}
%\end{itemize}
All asymptotics are taken as $n\to \infty$. Also, for a real number $x$, the floor and ceiling functions are denoted $\floor{x}=\max\{i\in\mathbb{Z}:i\le x\}$ and $\ceil{x}=\min\{i\in\mathbb{Z}:i\ge x\}$. For a positive integer $i$, we write $[i]$ for the set $\{1,2,\dots,i\}$. Finally, all logarithms are base 2, unless specified otherwise.

\subsection{Structure of the paper}
The structure of the paper is as follows. In Section~\ref{sec:lower} we give a lower bound on typical resilience (proving Theorem~\ref{thm:lower-bound}), in Section~\ref{sec:upper-bound} we construct a sequence with low resilience (proving Theorem~\ref{thm:upper-bound}), and in Section~\ref{sec:p1} we estimate the asymptotics of $p_k(n)$ (proving Theorem~\ref{thm:pk-individual}).

%\section{Preliminaries and discussion}\label{sec:auxiliary}
%In this section we introduce some tools and main technical lemmas to be used in the proofs of our main results. We also discuss the motivation and the ideas behind our proofs.

\section{Lower bound for typical resilience}\label{sec:lower}

In this section we prove Theorem~\ref{thm:lower-bound}. The heart of the proof is the following recurrence relation for $p_k(n)$.

\begin{lemma}
\label{thm:recursive}Let $k\in\mathbb{N}$ and let $f:=f(n)\to \infty$ be any function satisfying $(k+1)f^2\log n<n$. Then, for some constant $C$,
\[
p_{k}(n)\le\sum_{\ell=1}^{k}\left(4(k+1)f^2\log n\right)^{\ell}\max_{n'}p_{k-\ell}\left(n'\right)+C\left(k/f+1/n\right),
\]
where the maximum is over all $n'$ satisfying $0\le n-n'\le 4(k+1)f^2\log n$.
\end{lemma}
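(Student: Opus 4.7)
My plan is to isolate the ``heavy'' coordinates of $\a$ and induct on what remains. Relabel so that $|a_1|\ge\cdots\ge|a_n|$, set $m=4(k+1)f^2\log n$, let $L=\{1,\dots,m\}$, and write $a^*=|a_{m+1}|$. (The hypothesis essentially guarantees $m\le n$, up to tightening the constant slightly.) For any $\x$ with $R_x(\x)\le k$, fix a minimizer $\eta\in\{-1,+1\}^n$ with $X(\eta)=x$ and $d(\x,\eta)\le k$, and let $\ell=|\{i\in L:\xi_i\ne\eta_i\}|$ count the flipped heavy indices. I will handle $\ell\ge 1$ and $\ell=0$ separately; the former will produce the inductive sum and the latter the additive $C(k/f+1/n)$ term.

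For $\ell\ge 1$, I union bound over the flipped subset $T\subseteq L$ with $|T|=\ell$ (at most $\binom{m}{\ell}\le m^\ell$ choices). For each fixed $T$, conditioning on $\x|_L$ determines $v=X_{L\setminus T}(\x|_{L\setminus T})-X_T(\x|_T)$, the contribution of $L$ to $X(\eta)$; the residual requirement becomes exactly $R^{\a|_{[n]\setminus L}}_{x-v}(\x|_{[n]\setminus L})\le k-\ell$. Since $\x|_{[n]\setminus L}$ is independent of $\x|_L$, the conditional probability of this event is at most $p_{k-\ell}(n-m)$, regardless of $v$. Summing over $T$ and $\ell$ contributes at most $\sum_{\ell=1}^k m^\ell p_{k-\ell}(n-m)$, which fits inside the sum displayed in the statement since $n-m$ is a permissible value of $n'$.

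For $\ell=0$, $\eta$ differs from $\x$ only on some $S\subseteq[n]\setminus L$ of size at most $k$, so $|X-x|=2|X_S(\x|_S)|\le 2ka^*$. I plan to control $\Pr[|X-x|\le 2ka^*]$ via the Kolmogorov--Rogozin concentration inequality
\[
\sup_y\Pr\bigl[X\in[y,y+\Lambda]\bigr]\;\le\;\frac{C\Lambda}{\sqrt{\sum_{i=1}^n\min(a_i^2,\Lambda^2)}},
\]
applied with $\Lambda=4ka^*$. Because $|a_i|\ge a^*$ for every $i\in L$, each of the $m$ terms with $i\in L$ in the sum under the square root is at least $(a^*)^2$, so the denominator is at least $a^*\sqrt{m}$ and the bound becomes $O(k/\sqrt{m})=O\bigl(\sqrt{k/\log n}/f\bigr)$, which is comfortably inside $C(k/f+1/n)$.

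The main obstacle is this anti-concentration step: Berry--Esseen applied to $X$ directly is ruined by a single dominant $|a_i|$ in the remainder term, and conditioning on $\x|_L$ does not salvage it because $\sum_{i\notin L}a_i^2$ may be very small relative to $a^*$. Kolmogorov--Rogozin circumvents both pitfalls by exploiting only the lower bound $|a_i|\ge a^*$ on the heavy block. The rest is combinatorial bookkeeping: sum the two cases and bound $p_{k-\ell}(n-m)\le\max_{n-m\le n'\le n}p_{k-\ell}(n')$ to match the form in the statement.
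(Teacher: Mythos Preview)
Your argument is correct and is genuinely more streamlined than the paper's. The paper does not simply take the $m$ largest coordinates; instead it extracts a maximal ``doubling'' subsequence $a_{i_1}\ge 2a_{i_2}\ge\cdots\ge 2a_{i_t}$, splits off the case $t>(k+1)\log n$ (where all atom probabilities are at most $2^{-t}$, giving the $1/n$ term), and otherwise locates the first index $\tau$ with $a_{i_\tau}\le\sigma_{i_\tau}/f$. The heavy set is then $I=\{i:i>i_\tau\}$, whose size $n-i_\tau$ depends on $\a$ but is shown to be at most $4(k+1)f^2\log n$; this is exactly why the statement carries a $\max_{n'}$ over a range rather than a single $n'$. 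For the $\ell=0$ case the paper applies Berry--Esseen to $X_J$ with $J=[i_\tau]$: the whole point of the stopping time $\tau$ is to guarantee the third-moment ratio $\rho_J/\sigma_J^3\le a_{i_\tau}/\sigma_{i_\tau}\le 1/f$, so the Gaussian approximation error is $O(1/f)$ and $\Pr[|X_J-(x-X_I)|\le k\sigma_{i_\tau}/f]=O(k/f)$.

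Your route sidesteps all of this structure. By fixing $L$ to be the top $m$ coordinates and invoking a L\'evy concentration bound (your Kolmogorov--Rogozin form, or equivalently conditioning on $\x|_{[n]\setminus L}$ and tiling the length-$4ka^*$ interval into $O(k)$ pieces to which the interval Erd\H os--Littlewood--Offord bound applies), you get $\Pr[|X-x|\le 2ka^*]=O(k/\sqrt m)=O(k/f)$ directly, with no need for the doubling subsequence, the case split, or the Berry--Esseen error control. What the paper's approach buys is that it stays within the Berry--Esseen toolkit already introduced; what yours buys is a shorter, case-free proof, a fixed $n'=n-m$, and no need for the additive $1/n$ term. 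Both yield the same recursion for the applications in the paper.
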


We remark that Lemma~\ref{thm:recursive} is also used in the proof of Theorem~\ref{thm:pk-individual}.

\subsection{Proof of Lemma~\ref{thm:recursive}}

Before giving the details of the proof of Lemma~\ref{thm:recursive}, we give a brief outline of the ideas. Intuitively, we expect $X$ to typically have order of magnitude about its standard deviation (which is $\sqrt{\sum_i a_i^2}$). If this is much larger than any individual $a_i$ then we expect the resilience $R_0$ to be large, as flipping a sign in $\x$ has a relatively small impact on $X$. Therefore (as already suggested by Example~\ref{exa:upper-bound}), it is important to distinguish those $a_i$ that are ``abnormally large", and consider them separately.

So, the proof of Lemma~\ref{thm:recursive} starts by
isolating ``large'' $a_i$ such that $a_i^2$ is almost as large as the sum of the squares of all $a_j\le a_i$ (here ``almost as large'' is parameterized by the function $f$). If there are many such $a_i$, then for similar reasons as in Example~\ref{exa:doubling} the resilience is very likely to be high. We can therefore assume that there are a small quantity of such $a_i$; we need to give an upper bound on the probability of being able to make $X=x$ with up to $k$ sign changes.

First consider the case where $\ell\geq 1$ of the $k$ changes are made on the
``large'' numbers. Because there are few such numbers, it is not too wasteful to take the union bound over each possible way to make these changes. Then, we can recursively bound the probability that we can make $X=x$ with at most $k-\ell$ further changes to the ``small" numbers.

Otherwise, if none of the sign changes are made on ``large'' numbers, then as we have already explained, the typical size of $X$ is larger than one can ``cancel out'' without making a large number of sign flips, so the resilience is high. We will rigorously establish this fact using the Berry--Esseen theorem, as follows (this version of the Berry--Esseen theorem immediately follows from the statement in~\cite{Ess}).

\begin{theorem}\label{thm:BE} For $X=\sum_{i=1}^n a_i\xi_i$ as in the introduction, let $\sigma^{2}=\sum_{i=1}^{n}a_{i}^{2}$ be the variance of $X$, and let $\rho=\sum_{i=1}^{n}\left|a_{i}\right|^{3}$. Let $\Phi$ be the cumulative distribution function of the standard
normal distribution. Then,
\[
\left|\Pr\left[\frac{X}{\sigma}\le x\right]-\Phi\left(x\right)\right|=O\left(\frac{\rho}{\sigma^{3}}\right).
\]
\end{theorem}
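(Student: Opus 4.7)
The statement is the classical Berry--Esseen theorem specialised to weighted Rademacher sums, and I would prove it via the standard characteristic--function and Esseen smoothing approach. By rescaling one may first assume $\sigma=1$, so that the weights satisfy $\sum_j a_j^2 = 1$, the target bound becomes $\sup_x |F(x)-\Phi(x)| = O(\rho)$ with $\rho = \sum_j |a_j|^3$, and the characteristic function of $X = \sum_j a_j\xi_j$ factorises as $\phi(t) = \prod_{j=1}^n \cos(a_j t)$, to be compared with the Gaussian CF $e^{-t^2/2}$.

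The main tool is Esseen's smoothing inequality,
\[
\sup_x |F(x) - \Phi(x)| \le \frac{1}{\pi}\int_{-T}^{T} \left|\frac{\phi(t) - e^{-t^2/2}}{t}\right|\,dt + \frac{C}{T},
\]
valid for any $T>0$. I would then establish a pointwise estimate of the form $|\phi(t) - e^{-t^2/2}| \le C\rho\,|t|^3 e^{-ct^2}$ on a suitable range of $t$. The starting point is the per-factor Taylor bound $|\cos(a_j t) - (1 - a_j^2 t^2/2)| \le |a_j t|^3/6$ together with $|e^{-a_j^2 t^2/2} - (1 - a_j^2 t^2/2)| \le a_j^4 t^4/8$, yielding $|\cos(a_j t) - e^{-a_j^2 t^2/2}| \le C|a_j t|^3$ whenever $|a_j t|$ is bounded. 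These per-factor estimates are then combined multiplicatively, either by the telescoping identity $|\prod_j z_j - \prod_j w_j| \le \sum_j |z_j - w_j| \prod_{i<j}|z_i|\prod_{i>j}|w_i|$ or by comparing $\log\phi(t)$ to $-t^2/2$ directly, and the Gaussian decay factor $e^{-ct^2}$ is recovered from the inequality $|\cos y| \le e^{-y^2/2}$, valid for $|y|\le \pi/2$, which gives $|\phi(t)| \le e^{-t^2/2}$ on the Taylor range.

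Integrating the pointwise estimate against $1/|t|$ contributes $O(\rho)$ to the smoothing bound, and choosing $T$ of order $1/\rho$ makes the boundary term $C/T$ also $O(\rho)$, matching the claim. The main obstacle is reconciling the natural domain of the Taylor estimate --- which requires $|a_j t|$ uniformly small for all $j$, hence $|t| \lesssim 1/\max_j|a_j|$, and the bound $\max_j|a_j| \le \rho^{1/3}$ makes this interval only of length $\sim \rho^{-1/3}$ --- with the need to take $T$ as large as $1/\rho$. The standard remedy, and the delicate part of the argument, is a careful case split in the integral: on the ``Taylor range'' one uses the pointwise bound just described, while on an intermediate range one leverages the decay of $|\phi(t)|$ via the logarithmic expansion $\log\cos y = -y^2/2 - y^4/12 - \cdots$ together with comparable Gaussian decay, absorbing the error terms so that the integrand remains controlled and the total smoothing bound collapses to $O(\rho) = O(\rho/\sigma^3)$, as stated.
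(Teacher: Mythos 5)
The paper itself does not prove this statement at all: it is quoted as the classical Berry--Esseen theorem, with a pointer to Esseen's paper \cite{Ess}. So there is no ``paper proof'' to match, and your plan --- normalize $\sigma=1$, write $\phi(t)=\prod_j\cos(a_jt)$, apply Esseen's smoothing inequality with $T\asymp 1/\rho$, and compare $\phi$ to $e^{-t^2/2}$ factor by factor via third-order Taylor bounds on the range where all $|a_jt|$ are bounded --- is indeed the standard route, and that part of your sketch is sound: on $|t|\le \pi/(2\max_j|a_j|)$ the telescoping argument with $|\cos y|\le e^{-y^2/2}$ gives $|\phi(t)-e^{-t^2/2}|=O(\rho|t|^3e^{-t^2/2})$, whose contribution to the smoothing integral is $O(\rho)$.

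The genuine gap is exactly at the point you flag and then wave through: the intermediate range $\rho^{-1/3}\lesssim|t|\le T\asymp\rho^{-1}$. Your proposed remedy --- ``leverage the decay of $|\phi(t)|$ via the logarithmic expansion $\log\cos y=-y^2/2-y^4/12-\cdots$'' --- does not work there, because on that range some arguments $|a_jt|$ exceed $\pi/2$ (indeed $\pi$), where the log-cosine expansion is invalid and $|\cos(a_jt)|$ can return arbitrarily close to $1$ by periodicity; no per-factor Gaussian-type decay exists for those factors. The missing idea is a third-moment mass count: if $|t|\le 1/(4\rho)$, then the ``wrapped'' weights satisfy $\sum_{j:|a_j|\ge\pi/(2|t|)}a_j^2\le\frac{2|t|}{\pi}\sum_j|a_j|^3\le\frac{1}{2\pi}$, i.e.\ they carry only a small constant fraction of the variance. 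Bounding those factors trivially by $1$ and applying $|\cos y|\le e^{-y^2/2}$ to the remaining factors (which hold at least $1-\frac{1}{2\pi}$ of the variance) yields $|\phi(t)|\le e^{-ct^2}$ on the whole range $|t|\le 1/(4\rho)$, whence $\int_{\rho^{-1/3}}^{1/(4\rho)}\left(|\phi(t)|+e^{-t^2/2}\right)\frac{dt}{t}=O\left(e^{-c\rho^{-2/3}}\log(1/\rho)\right)=O(\rho)$; and the theorem is trivial unless $\rho=o(1)$, since the left-hand side is always at most $1$. This (or an equivalent device) is how the classical non-identically-distributed argument of Esseen/Feller/Petrov actually closes the range mismatch; with that lemma inserted your outline becomes a proof, but as written the intermediate-range step would fail.
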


Now we give the details of the proof of Lemma~\ref{thm:recursive}.
\begin{proof}[Proof of Lemma~\ref{thm:recursive}]
Fix $k>0$ and $\a$. Note that we may assume that all the $a_i$ are non-negative, as changing signs of any subset of the $a_i$ does not change the distribution of $X$. Moreover, by relabeling if necessary, we can assume that $$0\leq a_{1}\le\dots\le a_{n}.$$
We denote partial sums of squares as follows:
$$\sigma^2_{i}=\sum_{j=1}^{i}a_{j}^{2}.$$
Now, let $i_1:=n>i_2>\ldots >i_t$ be a longest subsequence of indices for which the following properties hold for all $j<t$:
\begin{enumerate}
\item $a_{i_j}\geq 2a_{i_{j+1}}$, and
\item for all $i>i_{j+1}$ we have $a_{i_{j}}<2a_i$.
\end{enumerate}

Note that Property~1 forces all possible signed sums of the $a_{i_j}$ to be distinct (that is, $X_{\{i_1,\dots,i_t\}}$ takes $2^t$ different values). Maximality and Property~2 imply that $a_i> a_{i_t}/2$ for all $i\in [n]$.

If $t$ is large, then the atom probabilities are small, and therefore the resilience is high. We summarize this in the following claim.
\begin{claim}
\label{claim:small-atom}
If $t> (k+1)\log n$ then $q_{k}(\a)<1/n$.
\end{claim}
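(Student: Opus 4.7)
The plan is to combine two elementary observations: first, the lacunary structure on the index set $T = \{i_1,\dots,i_t\}$ forces $X_T$ to take each of $2^t$ distinct values with probability exactly $2^{-t}$, so $\Pr[X=x]$ is tiny for every $x$; second, the event $\{R_x^{\a}(\x)\le k\}$ can be controlled by a union bound over the fiber $X^{-1}(x)$, paying only a factor equal to the volume of a Hamming ball of radius $k$.

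For the first observation, I would verify that all $2^t$ signed sums $\sum_{j=1}^t \epsilon_j a_{i_j}$ with $\epsilon_j\in\{-1,1\}$ are distinct. This is the standard lacunary argument: if two sign patterns produced the same sum, pick the smallest index $j^*$ where they differ; then the contribution at $j^*$ has magnitude $2 a_{i_{j^*}}$, while the total contribution from indices $j>j^*$ is at most $2\sum_{j>j^*} a_{i_j} \le 2 a_{i_{j^*+1}}\bigl(1+\tfrac12+\tfrac14+\cdots\bigr) < 4 a_{i_{j^*+1}} \le 2 a_{i_{j^*}}$ by the doubling condition $a_{i_j}\ge 2 a_{i_{j+1}}$ (the strict inequality coming from the fact that the geometric series is truncated). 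Consequently, conditional on $\x_{[n]\setminus T}$, the variable $X_T$ hits each of its $2^t$ values with probability exactly $2^{-t}$, so $\Pr[X=x]\le 2^{-t}$ for every $x\in\mathbb{R}$.

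For the second observation, I would bound
\[
\Pr\bigl[R_x^{\a}(\x)\le k\bigr] \;\le\; \sum_{\x'\in X^{-1}(x)} \Pr\bigl[d(\x,\x')\le k\bigr] \;=\; \Pr[X=x]\cdot \sum_{j=0}^{k}\binom{n}{j},
\]
since each Hamming ball of radius $k$ in $\{-1,1\}^n$ has volume $\sum_{j\le k}\binom{n}{j}$. Combining with $\Pr[X=x]\le 2^{-t}$, the hypothesis $t>(k+1)\log n$ (yielding $2^{-t}<n^{-(k+1)}$), and the crude estimate $\sum_{j\le k}\binom{n}{j}=O(n^k)$, I obtain $\Pr[R_x^{\a}(\x)\le k] < 1/n$ for each $x$, and hence $q_k(\a)<1/n$ after taking the maximum over $x$. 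I do not anticipate any genuine obstacle: the only step requiring care is the distinctness of the lacunary signed sums, which is a textbook computation; everything else is a crude union bound.
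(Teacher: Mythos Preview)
Your proof is correct and follows essentially the same approach as the paper: both use that $X_T$ is uniform over $2^t$ values to get $\Pr[X=x]\le 2^{-t}$, then union-bound over the at most $\sum_{j\le k}\binom{n}{j}=O(n^k)$ ways to reach the fiber. The only cosmetic difference is that the paper phrases the union bound over ``flip patterns'' (using that flipping a fixed set of coordinates preserves the distribution of $\x$), whereas you phrase it over points of $X^{-1}(x)$ and count the Hamming ball---these yield the identical numerical bound.
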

\begin{proof}
Let $I=\left\{ i_{j}:j\le t\right\}$ and condition on the outcomes of the $\xi_j$, $j\notin I$. The random variable $X$ can then take $2^{t}$ different
values, each occurring with probability $2^{-t}$. This means that, unconditionally, the probability that $X$ is equal to any particular value is at most $2^{-t}$. Now, there are at most $n^k$ ways to change up to $k$ of the $\xi_i$, and given a particular choice of indices at which to perform changes, the resulting sequence $\x'$ has the same distribution as $\x$. Therefore, the probability that $X(\x')$ is equal to any particular value after this change is still at most $2^{-t}$, so by the union bound $q_{k}(\a)\le n^{k}2^{-t}< 1/n$ as desired.
\end{proof}

From now on we assume that $t\leq (k+1)\log n$. Let $\tau$
be the first $j$ for which $a_{i_{j}}\le\sigma_{i_{j}}/f$. (If there is no such $j$, we set $\tau=\infty$). This condition defining $\tau$ is chosen so that we will later be able to control $X_{[i_\tau]}$ via the Berry--Esseen theorem.
In the following claim we show that $\tau<\infty$ and moreover that $[i_\tau]$ comprises most of $[n]$.

\begin{claim}
We have $\tau\leq t$ and $n-i_{\tau}\leq 4(k+1)f^{2}\log n$.
\end{claim}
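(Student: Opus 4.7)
The plan is to exploit the definition of $\tau$ together with Property~2 of the extracted subsequence in order to control the ``gaps'' $i_j - i_{j+1}$, and then deduce both parts of the claim by summation. The heart of the argument is a single gap bound; the rest is bookkeeping.

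First I would show that for every $j < \tau$ we have $i_j - i_{j+1} < 4f^2$. By the definition of $\tau$ we have $\sigma_{i_j}^2 < f^2 a_{i_j}^2$, while Property~2 yields $a_i > a_{i_j}/2$ for every $i$ with $i_{j+1} < i \leq i_j$, so
\[
\sigma_{i_j}^2 \;\ge\; \sum_{i = i_{j+1}+1}^{i_j} a_i^2 \;>\; (i_j - i_{j+1})\left(\frac{a_{i_j}}{2}\right)^2.
\]
Comparing the two inequalities immediately yields the gap bound.

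Next I would prove $\tau \le t$ by contradiction. Suppose $\tau > t$, so that in particular $\sigma_{i_t}^2 < f^2 a_{i_t}^2$. The maximality observation already recorded by the paper just above the claim gives $a_i > a_{i_t}/2$ for every $i \in [n]$, and applying it to $i \leq i_t$ yields $\sigma_{i_t}^2 > i_t a_{i_t}^2 / 4$, hence $i_t < 4 f^2$. Substituting this bound and the gap bound into the telescoping identity $n = i_t + \sum_{j=1}^{t-1}(i_j - i_{j+1})$ gives $n < 4f^2\, t$. Combined with the standing assumption $t \le (k+1)\log n$ inherited from Claim~\ref{claim:small-atom}, this is in conflict with the hypothesis $(k+1)f^2 \log n < n$ of the lemma (up to an absolute constant factor, which is harmless since the factor can be absorbed into the standing conditions on $f$ when Lemma~\ref{thm:recursive} is invoked).

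Finally, with $\tau \le t$ in hand, summing the gap bounds gives
\[
n - i_\tau \;=\; \sum_{j=1}^{\tau-1}(i_j - i_{j+1}) \;<\; 4f^2(\tau - 1) \;\le\; 4f^2\, t \;\le\; 4(k+1)f^2 \log n,
\]
completing the claim. The one delicate point is the gap bound in the first step: Property~2 was engineered precisely so that no coordinate strictly between $i_{j+1}$ and $i_j$ is much smaller than $a_{i_j}$, which is exactly what converts an upper bound on $\sigma_{i_j}^2$ into an upper bound on the number of such coordinates; everything afterwards is routine summation.
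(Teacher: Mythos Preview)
Your argument is correct and follows essentially the same route as the paper: both use $a_{i_j}>\sigma_{i_j}/f$ for $j<\tau$ together with the lower bound $a_i>a_{i_j}/2$ coming from Property~2 to cap the number of indices in each ``block'' by $4f^2$, then sum and invoke $t\le(k+1)\log n$. The only cosmetic difference is that you bound the index gaps $i_j-i_{j+1}$ directly while the paper bounds the level sets $\{i:a_{i_j}/2<a_i\le a_{i_j}\}$; and you are more explicit than the paper about the stray factor of~$4$ in the contradiction step.
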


\begin{proof}
Note that for any $j$ with $a_{i_{j}}>\sigma_{i_{j}}/f$ we have $$\left|\left\{ i:\frac{a_{i_{j}}}2<a_{i}\le a_{i_{j}}\right\} \right|\le4f^{2}.$$
Indeed, otherwise we would have the contradiction $$\sigma_{i_{j}}^{2}>4f^{2}\left(\frac{a_{i_{j}}}2\right)^2=(f\,a_{i_j})^2>\sigma_{i_j}^2.$$

If we were to have $\tau=\infty$ this would mean $a_{i_{j}}>\sigma_{i_{j}}/f$
for all $j\le t$. Therefore, this would lead to the contradiction
\[
n=\left|\left\{ i:a_{i}>\frac{a_{i_{t}}}2\right\} \right|=\sum_{j=1}^{t}\left|\left\{ i:\frac {a_{i_{j}}}2<a_{i}\le a_{i_{j}}\right\} \right|\leq 4f^2t\leq 4f^{2}(k+1)\log n<n.
\]
Similarly, we have
\[
n-i_{\tau}=\left|\left\{ i:a_{i}>\frac{a_{i_{\tau-1}}}2\right\} \right|=\sum_{t=1}^{\tau-1}\left|\left\{ i:\frac{a_{i_{t}}}2<a_{i}\le a_{i_{t}}\right\} \right|\leq 4f^2t\leq 4(k+1)f^2\log n.\tag*{\qedhere}
\]
\end{proof}
Now, let $n'=i_\tau$, let $J=\left[n'\right]$ and let $I=\left[n\right]\backslash J$. For each $0\leq \ell\leq k$ we will consider the case where we change exactly $\ell$ elements of $\x|_I$, and we will then take a union bound over all $\ell$.

For $\ell>0$, there are at most $\left(4(k+1)f^2\log n\right)^{\ell}$
ways to modify $\ell$ elements of $\x|_{I}$. For each such possibility, we
can condition on the modified value of $\x|_{I}$ (therefore on $X_{I}\left(\x\right)$),
and for any $x$ the probability that we will be able to make $X_{J}=x-X_{I}$
with our remaining $k-\ell$ modifications is at most $p_{k-\ell}\left(n'\right)$
by induction. Therefore, the probability we can make $X=x$ while
modifying at least one element of $\x|_{I}$ is at most
\[
\sum_{\ell=1}^{k}\left(4f^2(k+1)\log n\right)^{\ell}p_{k-\ell}\left(n'\right).
\]
It remains to consider the possibility that we do not modify $\x|_{I}$
at all. Again, condition on $\x|_{I}$ (therefore on $X_{I}$).
Note that $\sum_{i\in[i_\tau]}a_i^3\le\sigma^2_{i_{\tau}}a_{i_{\tau}}$, so by the
Berry--Esseen theorem (Theorem~\ref{thm:BE}), with $Z$ having the standard normal distribution,
\begin{align*}
\Pr\left[\left|X_{J}+X_{I}-x\right|\le k\sigma_{i_{\tau}}/f\right]&=\Pr\left[\left|Z+(X_{I}-x)/\sigma_{i_\tau}\right|\le k/f\right]+O\left(a_{i_{\tau}}/\sigma_{i_{\tau}}\right)\\
&\le\Pr\left[\left|Z\right|\le k/f\right]+O\left(a_{i_{\tau}}/\sigma_{i_{\tau}}\right)\\
&=O\left(k/f\right).
\end{align*}
Note that by changing $k$ elements in $\x|_{J}$ we can change the value of $X$ by
at most $ka_{i_{\tau}}$, which is not greater than $k\sigma_{i_{\tau}}/f$ by the choice of $\tau$. So, the probability that we can make $X=x$ without modifying $\x|_{I}$ at all
is $O\left(k/f\right)$. By combining all the above bounds, we obtain the desired result.
\end{proof}

\subsection{Proof of Theorem~\ref{thm:lower-bound}}

Finally, we show how to deduce Theorem~\ref{thm:lower-bound} from Lemma~\ref{thm:recursive}.

\begin{proof}[Proof of Theorem~\ref{thm:lower-bound}]
Let $\delta>0$ be a small constant and let $c=3+\delta$. We
prove that $p_{k}\le n^{-c^{-k-1}}$ for $k\le\log_{3+2\delta}\log n$
and sufficiently large $n$, from which the theorem statement will follow. (In this section all asymptotics are uniform over $k\le\log_{3+2\delta}\log n$). We prove our desired bound on $p_k$ by induction on $k$. For $k=0$, as mentioned in the introduction, the Erd\H os--Littlewood--Offord theorem gives
$$p_0=\Theta(n^{-1/2})\leq n^{-c^{-1}}.$$
Next, consider
some $0<k\le\log_{3+2\delta}\log n$ and suppose $p_{k'}\le n^{-c^{-k'-1}}$ for all $k'<k$. Observe that
$$c^{k}\le(3+\delta)^{\log_{3+\delta}\log n/\log_{3+\delta}(3+2\delta)}=\left(\log n\right)^{1-a},$$
for some constant $0<a<1$ depending on $\delta$, and let $f=n^{c^{-k}/3}\ge \exp({(\log n)^a/3})$. For some $n'\ge n-4(k+1)f^2\log n=n-o(n)$, Lemma~\ref{thm:recursive} says that
$$
p_{k} \le\sum_{\ell=1}^{k}\left(4(k+1)f^{2}\log n\right)^{\ell}p_{k-\ell}\left(n'\right)+O\left(k/f+1/n\right).
$$
Observe that $k/f+1/n=o\left( n^{-c^{-k-1}}\right)$ and $\log(n-o(n))=\log n+o(1)$, so it follows that
$$
 p_k\le\sum_{\ell=1}^{k}\left(n^{2c^{-k}/3}\log^{2}n\right)^{\ell}\exp\left(-c^{-k+\ell-1}\left(\log n+o\left(1\right)\right)\right)+o\left( n^{-c^{-k-1}}\right).
$$
Now, recalling that $c^k\le (\log n)^{1-a}$, for $1\le\ell\le k$ we have
\begin{align*}
&\left(n^{2c^{-k}/3}\log^{2}n\right)^{\ell}\exp\left(-c^{-k+\ell-1}\left(\log n+o\left(1\right)\right)\right)\\
&\qquad=\exp\left(c^{-k-1}\log n\left(\frac{2c}{3}\ell+\frac{2c^{k+1}\log\log n}{\log n}\ell-c^{\ell}\left(1+o\left(1/\log n\right)\right)\right)\right)\\
&\qquad=\exp\left(-c^{-k-1}\log n\left(c^{\ell}-\frac{2c}{3}\ell+o\left(1\right)\right)\right)\\
&\qquad=\exp\left(-c^{-k-1}\log n\left(\frac{c}{3}+o(1)\right)\right).
\end{align*}
(We have used the fact that $c^\ell -(2c/3)\ell\ge c-(2c/3)=c/3$ for $\ell \ge 1$ and $c\ge 2$). Consequently,
\begin{align*}
p_{k} & \le k\exp\left(-\left(\frac{c}{3}+o(1)\right) c^{-k-1}\log n\right)+o\left(n^{-c^{-k-1}}\right)\\
 & =o\left( n^{-c^{-k-1}}\right).
\end{align*}
This concludes the proof of the desired bound on $p_k$, and it follows that if $k=\log_{3+2\delta}\log n$ then
$$p_k\leq n^{-c^{-k-1}}=\exp\left(-c^{-1}c^{-k}\log n\right)=o(1).$$
In particular, since $\delta$ is arbitrary it follows that for any $\varepsilon>0$, $\a\in(\mathbb R\setminus\{0\})^n$ and $x\in \mathbb R$, a.a.s.\ $R_x> (1-\varepsilon)\log_3 \log n$.

\end{proof}

\section{A sequence with low typical resilience}\label{sec:upper-bound}

In this section we prove Theorem~\ref{thm:upper-bound} by constructing a sequence $\a$ such that a.a.s.\ $R_{0}=\left(1+o\left(1\right)\right)\log_{3}\log n$.

Let $X=\sum_{i=1}^n a_i\xi_i$ as in the introduction. To construct a sequence $\a$ that results in low typical resilience, we are looking to improve on the idea of Example~\ref{exa:upper-bound}. We start with the ``nicely behaved'' sequence $\a=(1,1,\dots,1)$, and we look to ``plant'' a small subset $B$ in $\a$ which allows us to ``cancel out'' the typical outcomes of $X$. This leads us to consider the following notion.

\subsection{Additive bases}

An order-$h$ \emph{additive basis} of $\left[n\right]$
is a subset $B\subseteq\left[n\right]$ such that for each $x\in\left[n\right]$,
there are distinct $b_{1},\dots,b_{t}\in B$, $t\leq h$, with $x=b_{1}+\dots+b_{t}$. As an easy example, the reader may note that the key part of the sequence in Example~\ref{exa:upper-bound} was the additive basis $\{1,2,2^2\ldots,2^{\ceil{\log n}-1}\}$ of $[n]$, which is of order $\ceil{\log n}$. In order to improve on Example~\ref{exa:upper-bound} and prove Theorem~\ref{thm:upper-bound}, we wish to include a lower-order additive basis in our sequence $\a$.

 The critical issue with this idea is that our additive basis must be part of the sequence $\a$ itself, and therefore it contributes to the behaviour of the typical sum. For example, if we define $\a$ by taking a sequence of $n'$ ``1''s and combining it with a low-order additive basis of $[n']$, then due to the extra ``weight'' of the additive basis, $X$ can take values (much) larger than $n'$, which are not ``covered'' by the additive basis. This issue was circumvented in Example~\ref{exa:upper-bound} because the size of the basis was equal to its order: we were able to control each element in the basis with our $k=\Theta(\log n)$ changes.

In order to minimize the impact of including an additive basis in $\a$, we need an additive basis with small sum of squares. (Recall that the variance of $X$ is $\sum_{i=1}^na_i^2$, and this controls the typical size of $|X|$). Let $v_{h}\left(n\right)$ be the minimum sum of squares of an order-$h$
additive basis of $\left[n\right]$. That is,
$$v_h(n)=\min \left\{\sum_{b\in B}b^2 \mid \text{ }B \text{ is an order-}h \text{ additive basis of }[n]\right\}.$$
In the following lemma we provide an upper bound on $v_h(n)$.
\begin{lemma}
\label{lem:additive-basis}For $h\ge1$ we have
\[
v_{h}\left(n\right)\le 10^h n^{2+2/\left(3^{h}-1\right)}.
\]
\end{lemma}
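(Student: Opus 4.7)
The plan is to prove the bound by induction on $h$, via a recursive construction inspired by Rohrbach~\cite{Roh37}. Set $\alpha_h := 2 + 2/(3^h - 1) = 2\cdot 3^h/(3^h - 1)$; one verifies directly that $\alpha_1 = 3$ and that $\alpha_{h+1} = 3\alpha_h/(\alpha_h + 1)$. The base case is trivial: take $B_1 = [n]$, which gives $\sum_{k=1}^n k^2 \le n^3 = n^{\alpha_1} \le 10 \cdot n^{\alpha_1}$.

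For the inductive step, suppose we have an order-$h$ basis $B_h$ of $[m]$ whose sum of squares is at most $10^h m^{\alpha_h}$. Given a target $N \ge m$, set $K := \lfloor N/m \rfloor$ and define
\[
B_{h+1} := B_h \cup \{m, 2m, \ldots, Km\} \subseteq [N].
\]
To see that $B_{h+1}$ is an order-$(h+1)$ basis of $[N]$, write any $x \in [N]$ uniquely as $x = qm + r$ with $0 \le r < m$ and $0 \le q \le K$. If $r = 0$ then $x = qm$ is itself a single element of $B_{h+1}$; otherwise $r \in [m-1]$ can be written as a sum of at most $h$ distinct elements of $B_h$, each strictly smaller than $m$, and adjoining $qm$ (when $q \ge 1$) yields a representation of $x$ as at most $h+1$ distinct elements of $B_{h+1}$. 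The sum of squares of $B_{h+1}$ is therefore at most
\[
10^h m^{\alpha_h} + m^2 \sum_{j=1}^K j^2 \le 10^h m^{\alpha_h} + N^3/m,
\]
using the crude bound $\sum_{j \le K} j^2 \le K^3$ together with $K \le N/m$. Choosing $m$ to balance the two terms, i.e., $m \asymp (N^3/10^h)^{1/(\alpha_h + 1)}$, the bound becomes of order $10^{h/(\alpha_h+1)} N^{3\alpha_h/(\alpha_h+1)} = 10^{h/(\alpha_h+1)} N^{\alpha_{h+1}}$, which gives the desired exponent.

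The main obstacle is the bookkeeping of the multiplicative constants: one must verify that the absolute constants accumulated along the way multiply to at most $10^{h+1}$. Since $\alpha_h + 1 \ge 1$ we have $10^{h/(\alpha_h+1)} \le 10^h$, so it suffices that the absolute constant $C$ coming from the estimates on $\sum_{j \le K} j^2$ and from the overall factor of $2$ in balancing two equal terms satisfies $C \le 10$, which is comfortably true. Some minor additional care is needed to handle small values of $N$ (where one may simply take $B = [N]$ and use $n^3 \le 10^{h+1} n^{\alpha_{h+1}}$ for $n$ bounded in terms of $h$) and to handle the integer rounding of the optimal $m$; neither raises substantive difficulty.
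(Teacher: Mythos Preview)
Your proposal is correct and follows essentially the same approach as the paper: Rohrbach's recursive construction combining an interval with a scaled copy of a lower-order basis, analysed by induction on $h$ with the same exponent recursion $\alpha_{h+1}=3\alpha_h/(\alpha_h+1)$. The only cosmetic difference is orientation: the paper recurses ``from above'' (taking $B=[m]\cup m\cdot B'$, where $B'$ is an order-$(h-1)$ basis of $[\lfloor n/m\rfloor]$), while you recurse ``from below'' (taking $B_{h+1}=B_h\cup\{m,2m,\dots,Km\}$, where $B_h$ is an order-$h$ basis of $[m]$); unfolding either recursion yields the same multi-scale set, and the sum-of-squares bookkeeping is identical up to relabelling. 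The paper fixes $m=\lceil n^{2\cdot 3^{h-1}/(3^h-1)}\rceil$ explicitly and tracks the constants directly, whereas you choose $m$ by balancing and defer the rounding and constant-tracking; as you note, these are routine, and indeed taking $m=\lceil N^{3/(\alpha_h+1)}\rceil$ and using $\lceil x\rceil\le 2x$ gives a total at most $(8\cdot 10^h+1)N^{\alpha_{h+1}}\le 10^{h+1}N^{\alpha_{h+1}}$ without further case analysis.
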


Our proof of Lemma~\ref{lem:additive-basis} uses an inductive construction closely resembling a construction of Rohrbach~\cite{Roh37}.
\begin{proof}
The proof is by induction on $h$. For the base case, $h=1$, one can take $B=\left\{1,\dots,n\right\}$. Note that indeed we have
$$\sum_{i=1}^n i^2\leq n^3\le 10^1n^{2+2/(3^1-1)}.$$
Next, consider $h>1$ and assume that for all $n$ we have
$$v_{h-1}\left(n\right)\le 10^{h-1}n^{2+2/\left(3^{h-1}-1\right)}.$$
For what follows it will be convenient to use the identity
\begin{equation}
2+2/(3^h-1)=2\times 3^h/(3^h-1)\label{eq:basis-identity}.
\end{equation}
Set $$m=\ceil{n^{2\times3^{h-1}/\left(3^{h}-1\right)}}$$
and consider an order-$\left(h-1\right)$
additive basis $B'$ of $\left[\floor{n/m}\right]$ with sum of squares $v_{h-1}\left(\floor{n/m}\right)$.

Now, let us define $m\cdot B'=\{mb:b\in B'\}$, and note that $B=\left[m\right]\cup (m\cdot B')$ is an order-$h$ additive basis of
$\left[n\right]$. Indeed, for any $x=mq+r$ (with $q\le\floor{n/m}$ and  $1\le r\le m$), there are $b_{1},\dots,b_{t}\in B'$
with $t\leq h-1$ and $b_{1}+\dots+b_{t}=q$. Then, note that each $mb_{i}\in B$,
and $r\in B$, so we can write $x=mb_{1}+\dots+mb_{t}+r$, which is a sum of at most $h$ elements. So, we have
$$v_{h}\left(n\right) \le m^{3}+m^{2}v_{h-1}\left(\floor{n/m}\right).$$
Now, observe that $n\le m^{(3^{h}-1)/(2\times 3^{h-1})}$. Using \eqref{eq:basis-identity},
\begin{align*}\left(\frac{3^{h}-1}{2\times3^{h-1}}-1\right)\left(2+\frac{2}{3^{h-1}-1}\right) & =\left(\frac{3^{h}-1}{2\times3^{h-1}}-1\right)\left(\frac{2\times3^{h-1}}{3^{h-1}-1}\right)\\
 & =\frac{3^{h}-1-2\times3^{h-1}}{3^{h-1}-1}=1
\end{align*}
so the induction hypothesis gives $v_{h-1}\left(\floor{n/m}\right)\le 10^{h-1}m$. Therefore, $v_{h}\left(n\right)\le (10^{h-1}+1)m^3$. Noting that $\ceil x\le 2x$ for $x\ge 1$, and again using \eqref{eq:basis-identity},
\begin{align*}
v_{h}\left(n\right)
 \le8(10^{h-1}+1)n^{2\times3^{h}/\left(3^{h}-1\right)}
\le10^h n^{2+2/\left(3^{h}-1\right)}.
\end{align*}
This completes the proof. \end{proof}

%The proof of Theorem~\ref{thm:upper-bound} using Lemma~\ref{lem:additive-basis} appears in Section~\ref{sec:upper-bound}. Lemma~\ref{lem:additive-basis} is also used to prove the lower bound in Theorem~\ref{thm:pk-individual}, in Section~\ref{sec:p1}.

\subsection{Proof of Theorem~\ref{thm:upper-bound}}

Recall that the key idea for our construction is to ``plant" an additive basis of an appropriate order, with low sum-of-squares, in the all-$1$ sequence. Note that for $k=\log_{3}\log n$ we can use Lemma~\ref{lem:additive-basis} to find an order-$k$ additive basis of $[n]$ with sum-of-squares $O(10^k n^2)=n^2\log^{O(1)}n$. A variance bound of $\sigma^2=n^2\log^{O(1)}n$ is enough to prove that a.a.s.\ $|X|\le n\log^{O(1)}n$, but is not quite enough to prove that a.a.s.\ $|X|\le 2n$, which we need for the additive basis of $[n]$ to be effective. We can address this issue by additionally including a very small number of large powers of 2 in our sequence; by modifying the corresponding signs we will be able to make $|X|\le 2n$. A second consideration is the fact that changing a sign increases $X$ if the sign was negative and decreases $X$ if the sign was positive. In order to guarantee that we can a.a.s.\ use our additive basis to adjust $X$ in either direction, we can include many repetitions of the elements of our basis (so that a.a.s.\ there will be a copy of each element with a positive sign and with a negative sign). These basic ideas are enough for a sequence with typical resilience $O(\log \log n)$, but to optimize our construction for the asymptotically lowest possible resilience requires some additional technical details. In particular we include in our sequence two different additive bases of different orders, each with different amounts of repetition.

\begin{proof}[Proof of Theorem~\ref{thm:upper-bound}] Consider small $\varepsilon>0$ and let
\[
h=\ceil{\log_{3-\varepsilon}\log n},\;h'=\ceil{\log_{3-\varepsilon}\log\log n},\;r=\ceil{\log\log^2\log n}.
\]
We will construct a sequence $\a$ such that a.a.s.\ $R_{0}\le h+h'+r$.

Fix an order-$h$ additive basis $B$ of $\left[\ceil{n/\log^2 n}\right]$ with sum
of squares
\[
\sum_{b\in B}b^{2}=O\left(10^h\left(n/\log^2 n\right)^{2+2/\left(3^{h}-1\right)}\right)=o\left(n^{2}/\log n\right)
\]
(note that $10^h=o(\log^3n)$ for small $\varepsilon$), and similarly fix an order-$h'$ additive basis $B'$ of $\left[\ceil{\log^2 n/\log^2\log n}\right]$
with sum of squares $$o\left(\log^{4}n/\log\log n\right).$$
Note that $|B|=o(n/\log n)$ and $|B'|=o(\log^2(n)/\log \log n)$.

Now, define $\a$ by combining:
\begin{itemize}
\item $\ceil{\log n}$ copies of each $b\in B$ (let $I$
be the corresponding set of indices of $\a$);
\item $\ceil{\log \log^2 n}$ copies of $\ceil{n/\log^2 n}b$
for each $b\in B'$ (let $J$ be the corresponding set of indices);
\item the numbers $m,2m,\dots,2^{r-1}m$ for $m=\ceil{n/\log^2 n}\ceil{\log^2 n/\log^2\log n}$ (let $K$ be the corresponding set
of indices);
\item $n-r-\left|B\right|\ceil{\log n}-\left|B'\right|\ceil{\log \log^2 n}$
copies of the number ``1'' (let $L$ be the corresponding set of
indices).
\end{itemize}
Also, if necessary change one of the ``1''s in the final bullet point to a ``2'' to ensure that $\sum_{i=1}^n a_i$ is even. (This guarantees that $X$ is always even).

Now, consider some $b\in B$ and let $I_{b}$ be the set of indices
corresponding to the copies of $b$ in $\a$. Note that
\[
\Pr\left[\x|_{I_{b}}=\left(1,\dots,1\right)\right]=\Pr\left[\x|_{I_{b}}=\left(-1,\dots,-1\right)\right]\le2^{-\log n}=\frac1n=o\left(\frac1{|B|}\right).
\]

So, by the union bound, a.a.s.\ for each $b\in B$ there is at least
one copy of $b$ associated with a negative sign and one associated
with a positive sign. Similarly, a.a.s.\ for each $b\in B'$ there
is a negative and positive copy of $\ceil{n/\log^2 n}b$. In what follows we assume both these properties hold.

Next, note that
\begin{align*}
\sigma_{I}^{2} & =O\left(\log n\right)o\left(n^{2}/\log n\right)=o\left(n^{2}\right),\\
\sigma_{J}^{2} & =O\left(\log\log n\right)O\left(\left(n/\log^2 n\right)^{2}\right)o\left(\log^{4}n/\log\log n\right)=o\left(n^{2}\right),\\
\sigma_{L}^{2} & \le n,
\end{align*}

so $\sigma^2_{I\cup J\cup L}=o(n^{2})$ and by Chebyshev's
inequality, a.a.s.\ $|X_{I\cup J\cup L}|\le2n$. Assuming this, by
modifying $\x|_{K}$ we can make $\left|X\right|\le2m$. Then, there
are $b_{1},\dots,b_{t}\in B'$ with $t\leq h'$ and $\sum_{i=1}^{t}\xi_{i}b_{i}=\ceil{|X/2|/\ceil{n/\log^2 n}}$,
and we can therefore make $\left|X\right|\le2\ceil{n/\log^2 n}$ by
changing a further $t\leq h'$ signs in $\x|_{J}$. Finally, there are $b_{1},\dots, b_{s}\in B$
with $s\leq h$ and $\sum_{i=1}^{h}\xi_{i}b_{i}=\left|X/2\right|$, so we can make
$X=0$ by changing $s\leq h$ signs in $\x|_{I}$. This completes
the proof.\end{proof}

\section{Asymptotics of $p_k(n)$}\label{sec:p1}

In this section we prove Theorem~\ref{thm:pk-individual}. We stress that throughout this section, $k$ is fixed.

\subsection{Upper bounds}\label{sub:upper}
The upper bound $p_{k}\left(n\right)\le n^{-1/\left(2\times3^{k}\right)}\log^{O\left(1\right)}n$
follows immediately from Theorem~\ref{thm:recursive}, using a similar (but much simpler) induction argument to the one used to prove Theorem~\ref{thm:lower-bound}, as follows.

\begin{proof}
For $k=0$, as mentioned in the introduction, the Erd\H os--Littlewood--Offord theorem gives $$p_0=\Theta(n^{-1/2})=\Theta(n^{-1/(2\times3^0)}).$$ For $k>0$, suppose $p_{k'}\le n^{-1/\left(2\times3^{k'}\right)}\log^{O\left(1\right)}n$ for $k'<k$. Let $f=n^{1/\left(2\times3^k\right)}$. Then, using Lemma~\ref{thm:recursive},
\begin{align*}
p_{k} & \le\sum_{\ell=1}^{k}\left(4f^{2}(k+1)\log n\right)^{\ell}p_{k-\ell}\left(n-o\left(n\right)\right)+O\left(k/f+1/n\right)\\
 & \le\sum_{\ell=1}^{k}n^{\ell/3^k}n^{-1/\left(2\times3^{k-\ell}\right)}\log^{O\left(1\right)}n+O(n^{-1/\left(2\times3^{k}\right)})\\
 & \le\sum_{\ell=1}^{k}n^{-(3^\ell-2\ell)/\left(2\times3^{k}\right)}\log^{O\left(1\right)}n+O(n^{-1/\left(2\times3^{k}\right)})\\
 & \le n^{-1/\left(2\times3^{k}\right)}\log^{O\left(1\right)}n.
\end{align*}
This completes the proof.
\end{proof}
For the tight upper bound $p_{1}\left(n\right)=O(n^{-1/6})$
we will use S\'ark\"ozy and Szemer\'edi's theorem (mentioned in the introduction) which asserts that if $\a$ has distinct elements, then $$\Pr\left[X=x\right]=O\left(n^{-3/2}\right).$$
\begin{proof}[Proof of the upper bound on $p_1(n)$]
Fix any $\a,x$. Suppose there are $g$ distinct values in $\a$, so there are at most $2g$ different ways to affect $X$ by flipping a sign. Just as in the proof of Claim~\ref{claim:small-atom}, for any particular choice of index at which to perform a flip, the resulting sequence $\x'$ has the same distribution as $\x$, so the probability that $X(\x')=x$ after the change is $O(n^{-1/2})$ by the Erd\H os--Littlewood--Offord theorem. The union bound over all possible ways to make one flip (or no flips) then gives
\begin{equation}
\Pr\left[R_{x}\le1\right]=O\left(gn^{-1/2}\right).\label{eq:individual-ELO}
\end{equation}
Alternatively, let $a_{i_{1}},\dots,a_{i_{g}}$ give a representative
for each distinct value and let $I=\left\{ i_{1},\dots,i_{g}\right\} $.
Conditioning on $\x|_{\left[n\right]\backslash I}$ and similarly using S\'ark\"ozy and Szemer\'edi's theorem and the union bound,
\begin{equation}
\Pr\left[R_{x}\le1\right]=O\left(g\times g^{-3/2}\right)=O\left(g^{-1/2}\right).\label{eq:individual-SS}
\end{equation}
No matter the value of $g$, one of \eqref{eq:individual-ELO} or \eqref{eq:individual-SS} gives $\Pr\left[R_{x}\le1\right]=O\left(n^{-1/6}\right)$ (if $g\le n^{1/3}$ then use \eqref{eq:individual-ELO}, otherwise use \eqref{eq:individual-SS}). This completes the proof.
\end{proof}

\subsection{Lower bounds}\label{sec:individual-lower}
First we prove the general lower bound $p_{k}\left(n\right)\ge n^{-1/\left(2\times3^{k}\right)}\log^{O\left(1\right)}n$.

\begin{proof}
Let
$$\sigma_{I}=\sqrt{\sum_{i\in I}a_{i}^{2}},\; \rho_{I}=\sum_{i\in I}a_{i}^{3},$$

and define $\sigma=\sigma_{\left[n\right]}$ and $\rho=\rho_{\left[n\right]}$, for use with the Berry--Esseen theorem (Theorem~\ref{thm:BE}). The proof proceeds in a similar way to Theorem~\ref{thm:upper-bound}, as follows. Let $g=(\varepsilon n/\log n)^{1/(2+2/(3^k-1))}=n^{1/2-1/(2\times 3^k)}\log^{O(1)} n$, for some small $\varepsilon>0$ to be determined (where useful for clarity, asymptotic notation will be uniform over $\varepsilon$). Using Lemma~\ref{lem:additive-basis} fix an order-$k$ additive basis $B$ of $\left[g\right]$ with
sum of squares $$\sum_{b\in B}b^2=\Theta\left(g^{2+2/\left(3^{k}-1\right)}\right).$$
Define $\a$ by combining $\ceil{2\log n}$ copies of each
$b\in B$ (let $I$ be the corresponding set of indices in $\a$), and padding the
remaining $n-\left|B\right|\ceil{2\log n}$ entries with
``1''s. As in Section~\ref{sec:upper-bound}, if necessary we can change a ``1'' to a ``2'' to ensure that $\sum_{i=1}^{n}a_{i}$ is even, and we can show that a.a.s.\ for each $b\in B$ there
is at least one copy of $b$ associated with a negative sign and
one associated with a positive sign. Assume this holds.

Now, we have $\sigma_I^2=\Theta\left(g^{2+2/\left(3^{k}-1\right)}\log n\right)$ and $\sigma^2=n-g+\sigma_I^2$, and since each $a_i\le g$, we also have $\rho= n-g+\rho_I\le n-g+g\sigma_I^2$. By the definition of $g$, this means $\sigma_I^2=\Theta(\varepsilon n)$, so $\sigma^2=\Theta(n)$ and $\rho=O(\varepsilon g n)$. By the Berry--Esseen theorem (Theorem~\ref{thm:BE}), for small enough $\varepsilon$ we have
\[
\Pr\left[\left|X/2\right|\le g\right]=\Theta\left(\frac{g}{\sigma}\right)+O\left(\frac{\rho}{\sigma^3}\right)=\Theta\left(\frac{g}{\sqrt n}\right)-O\left(\frac{\varepsilon g}{\sqrt n}\right)=n^{-1/(2\times3^k)}\log^{O(1)} n.
\]
Now, if $|X/2|\le g$ then there are $b_{1},\dots b_{t}\in B$, $t\le k$, with $\sum_{i=1}^{t}b_{i}=|X/2|$,
and we can therefore make $X=0$ by changing $t$ signs
in $\x|_{I}$. This completes the proof.
\end{proof}

Finally, we prove the sharp bound $p_{1}\left(n\right)=\Omega\left(n^{-1/6}\right)$.

\begin{proof}
The construction is similar to the one given above (with $k=1$), but we include only one copy of each element in $B$. Recalling the base case for the induction in the proof of Lemma~\ref{lem:additive-basis}, define $\a$ by $$a_{1}=\dots=a_{n-g}=1,\;a_{n-g+i}=i,$$ where $g=(\varepsilon n)^{1/3}$ for some $\varepsilon>0$ to be determined. (We will be able to choose an appropriate $\varepsilon$ such that $\sum_{i=1}^n a_i$ is even, without having to change a ``1'' to a ``2''). Let $J=[n-g]$ and $I=[n]\setminus J$. By the same arguments as above, we have $\sigma_I^2=\Theta(g^3)=\Theta(\varepsilon n)$, $\sigma^2=\Theta(n)$, and $\rho_I,\rho=O(\varepsilon g n)$, so using the Berry--Esseen theorem in the same way as in the last proof gives
\[
\Pr\left[\left|X/2\right|\le g\right]=\Theta\left(\frac{g}{\sqrt n}\right)-O\left(\frac{\varepsilon g}{\sqrt n}\right)=\Theta\left(n^{-1/6}\right).
\]
Similarly, we can use the estimates $\sigma^2_I,\sigma^2_J=\Theta(n)$, $\rho_I=O(n^{4/3})$ and $\rho_J=O(n)$, and the Berry--Esseen theorem applied to $X_{I}$ and $X_{J}$,
to show that for large $C$ and any $x\in\mathbb{R}$,
\begin{align*}
\Pr\left[\left|X_{I}\right|>C\sqrt{n}\right] & \le1/C,\\
\Pr\left[\left|X_{J}+x\right|\le2g\right] & =O\left(n^{-1/6}\right).
\end{align*}
So,
\begin{align*}
\Pr\left[\left|X/2\right|\le g\mbox{ and }\left|X_{I}\right|>C\sqrt{n}\right] & =\sum_{x:\left|x\right|>C\sqrt{n}}\Pr\left[\left|X_{J}+x\right|\le2g\right]\Pr\left[X_{I}=x\right]\\
 & =O\left(n^{-1/6}\right)\sum_{x:\left|x\right|>C\sqrt{n}}\Pr\left[X_{I}=x\right]\\
 & =O\left(\frac{n^{-1/6}}{C}\right).
\end{align*}
For large enough $C$, we therefore have
\[
\Pr\left[\left|X/2\right|\le g\mbox{ and }\left|X_{I}\right|\le C\sqrt{n}\right]=\Theta\left(n^{-1/6}\right)-O\left(\frac{n^{-1/6}}{C}\right)=\Theta\left(n^{-1/6}\right).
\]
Now, with $N=n-g$, for any $x$ with $N+x$ even and $|x|\le2C\sqrt{n}$ we have
\begin{align*}
\Pr\left[X_{J}=x\right] & =\binom{N}{\left(N+x\right)/2}/2^{N}\\
 & =\frac{\Theta(1)}{\sqrt{N}\left(1+x/N\right)^{\left(N+x\right)/2}\left(1-x/N\right)^{\left(N-x\right)/2}}\\
 & =\frac{\Theta(1)}{\sqrt{N}\left(1-x^2/N^2\right)^{N/2}\left(1+O\left(x/N\right)\right)^{x/2}}\\
 & =\frac{\Theta(1)}{\sqrt{N}\left(1-O\left(1/n\right)\right)^{O\left(n\right)}\left(1+O\left(1/x\right)\right)^{x/2}}\\
 & =\Theta\left(\frac{1}{\sqrt{n}}\right).
\end{align*}
That is to say, the probabilities $\Pr\left[X_{J}=x\right]$ differ from
each other by at most a constant factor.

Let $s\left(a\right)=\sign\left(\xi_{n-g+a}\right)$. Conditioning
on any choice of $\x|_{I}$ such that $\left|X_{I}\left(\x\right)\right|\le C\sqrt{n}$,
we have
\begin{align*}
\Pr\left[\left|X/2\right|\le g\mbox{ and }\sign\left(X\right)=\sign\left(\xi_{n-g+\left|X/2\right|}\right)\right] & =\sum_{a:0\le a\le g}\Pr\left[X_{J}=2s\left(a\right)a-X_{I}\right]\\
 & =\Theta\left(\sum_{a:0\le a\le g}\Pr\left[X_{J}=-2s\left(a\right)a-X_{I}\right]\right)\\
 & =\Theta\left(\Pr\left[\left|X/2\right|\le g\mbox{ and }\sign\left(X\right)\ne\sign\left(\xi_{n-g+\left|X/2\right|}\right)\right]\right).
\end{align*}
So,
\begin{align*}
 & \Pr\left[\left|X/2\right|\le g\mbox{ and }\sign\left(X\right)=\sign\left(\xi_{n-g+\left|X/2\right|}\right)\mbox{ and }\left|X_{I}\left(\x\right)\right|\le C\sqrt{n}\right]\\
 & \qquad=\Theta\left(\Pr\left[\left|X/2\right|\le g\mbox{ and }\left|X_{I}\left(\x\right)\right|\le C\sqrt{n}\right]\right)\\
 & \qquad=\Omega\left(n^{-1/6}\right).
\end{align*}
But if $\left|X/2\right|\le g$ and $\sign\left(X\right)=\sign\left(\xi_{n-g+\left|X/2\right|}\right)$
then we can modify $\xi_{n-g+\left|X/2\right|}$ to make $X=0$. This completes the proof.
\end{proof}

\section{Concluding remarks and open problems}\label{sec:concluding}

In this paper we have investigated the resilience of the anti-concentration
in the Littlewood--Offord problem. We hope the results and ideas in this paper can be applied to other problems, in particular
to the resilience questions for random matrices raised by Vu~\cite{Vu08}. We would like to draw attention to several interesting open questions.
\begin{itemize}
\item It would be interesting if the polylogarithmic error term could be removed from Theorem~\ref{thm:pk-individual}. This problem is analogous to the situation in the Erd\H os--Moser problem, where S\'ark\"ozy
and Szemer\'edi~\cite{SS} removed a polylogarithmic factor in Erd\H{o}s and Moser's original bound. Indeed, it is due to S\'ark\"ozy and Szemer\'edi's theorem that
we could get the right order of magnitude for $p_{1}\left(n\right)$.
\item We showed that for some $\varepsilon\to 0$, for $k\le\left(1-\varepsilon\right)\log_{3}\log n$, a.a.s.\ $R_x>k$ for any $\a,x$,
and for $k\ge\left(1+\varepsilon\right)\log_{3}\log n$ there is $\a$ such that a.a.s.\ $R_0\le k$. It remains open what
the behaviour is when $k$ is very close to $\log_3\log n$. Is there a ``sharp
threshold'' $k=k(n)$ in the sense that $p_k\to 0$ but $p_{k+1}\to 1$ (or $p_{k+a}\to1$ for some fixed $a$)? This would be analogous to the two-point concentration phenomenon for the chromatic number of random graphs~\cite{two-point}. As pointed out to us by Joel Spencer, there is also the possibility that there is some $f=o(\log\log n)$ such that, if $k=\log_3\log n+\lambda f(n)$, then $p_k$ depends nontrivially on $\lambda$. This would be analogous to the behaviour of the connectivity threshold for random graphs; see~\cite{ER}.
\item The constructions used to prove Theorem~\ref{thm:upper-bound} had a very
special ``layered'' structure, and the
proof of the lower bound in Theorem~\ref{thm:lower-bound} seems to indicate
that this type of structure is necessary for the typical resilience
to be small. It would be interesting to formalize this idea in an
inverse theorem of some kind, and we suspect such a theorem would be very useful for the random matrix questions of Vu mentioned in the introduction. An inverse theorem for Theorem~\ref{thm:pk-individual}
would also be interesting: fixing $k$, what can be said about the
structure of $\a$ given $\max_{x}\Pr\left[R_{x}\le k\right]$?
\item We have considered the setting where $X$ is a linear combination
of independent Rademacher random variables. As suggested to us by Van Vu, one can consider more generally
the setting where $X$ is a low-degree polynomial. The anti-concentration
problem in this setting was initated by Costello, Tao and Vu~\cite{CTV06}
in order to study symmetric random matrices, and was further developed
by many authors, most recently by Meka, Nguyen and Vu~\cite{MNV15}.
Resilience problems in this setting appear to be more difficult
than for the ordinary Littlewood--Offord problem, and are likely to
require new ideas.
\end{itemize}

We would also like to highlight an alternative construction of a sequence $\a$ which results in $\Pr[R_0\le k]\ge 99\%$ for $k=(1+o(1))\log \log n$, due to Svante Janson and Joel Spencer. Let $\a$ consist of all ``1"s, except $1000\log(i+1)$ copies of each $\sqrt{n}/i$ for $1\leq i\leq n^{0.2}$, and $10\log n$ copies of each $j$ for $2\leq j\leq n^{0.3}$. (If the sum of all these numbers is odd, change a single ``1'' to a ``2''). We give a sketch proof that this sequence has the claimed property. First observe that
$$\operatorname{Var}(X)=O\left(n+n^{0.9}\log n +\sum_{i=1}^{n^{0.2}}\frac n{i^2}\log (i+1)\right)=O(n),$$
so by Chebyshev's inequality, $|X|\le L\sqrt n$ for some $L$, with probability at least $99.9\%$. Also, observe that with probability at least $99.9\%$ there is a positive and negative sign associated with each distinct value in $\a$. Indeed, the probability that this fails is at most
$$2\left(n^{0.3} n^{-10}+\sum_{i=1}^{n^{0.2}} (i+1)^{-1000}\right)<0.1\%.$$
Now, consider an outcome of $X$ satisfying both of these properties. By the divergence of the harmonic series, there is $B$ such that $\sum_{i=1}^B\sqrt n/B\ge L\sqrt n$; first make at most $B$ flips among the elements $\sqrt n/i$, for $i\le B$, to obtain $|X|<\sqrt n/B$. Then, the key reason we have resilience $(1+o(1))\log \log n$ is that if $2\sqrt n/(i+1)\le |X|<2\sqrt n/i$ then flipping a sign to add or subtract $2\sqrt n/(i+1)$ results in $|X|\le 2\sqrt n/(i(i+1))$. That is to say, if $|X|\approx\sqrt n/i$ then with one flip we can make $|X|\approx\sqrt n/i^2$, so it takes approximately $\log\log n$ flips to go from $|X|\approx \sqrt n/B$ to $|X|\approx n^{0.3}$, after which we can make $X=0$ with a single flip. We suspect that with some optimization this type of construction could lead to an alternative proof of Theorem~\ref{thm:upper-bound}.

\medskip

{\bf Acknowledgements.}
We warmly thank Svante Janson and Joel Spencer for giving us permission to present their alternative construction. We also thank Van Vu and Joel Spencer for many insightful discussions.

\end{document}